\documentclass{amsart}
\usepackage{amssymb}
\usepackage{xcolor}

\title{Extremal Aspects of the Erd\H os--Gallai--Tuza Conjecture}
\author{Gregory J.~Puleo}
\address{Department of Mathematics, University of Illinois at Urbana-Champaign. Now at Coordinated Science Lab, University of Illinois at Urbana-Champaign.}
\renewcommand{\subset}{\subseteq}
\newcommand{\st}{\colon\,}
\newcommand{\aphtau}{f_1}
\newcommand{\aphtaub}{f_{\mathrm{B}}}
\newcommand{\join}{\vee}
\newcommand{\iso}{\cong}
\newcommand{\lfrac}[2]{#1/#2}

\newcommand{\bad}[1]{\textcolor{red}{\emph{#1}}}
\newcommand{\citethis}[1]{\bad{[CITE THIS]}}
\newcommand{\caze}[2]{\textbf{Case {#1}:} \textit{#2}}
\newcommand{\sizeof}[1]{\left\lvert{#1}\right\rvert}

\newcommand{\aph}{\alpha_1}

\let\oldtau\tau
\renewcommand{\tau}{\oldtau_1}
\newcommand{\sbar}{\overline{S}}
\newcommand{\taub}{\oldtau_{\mathrm{B}}}
\newtheorem{proposition}{Proposition}[section]
\newtheorem{conjecture}[proposition]{Conjecture}
\newtheorem{lemma}[proposition]{Lemma}
\newtheorem{theorem}[proposition]{Theorem}
\newtheorem{corollary}[proposition]{Corollary}
\theoremstyle{definition}

\theoremstyle{remark}

\begin{document}
\begin{abstract}
  Erd\H os, Gallai, and Tuza posed the following problem: given an
  $n$-vertex graph $G$, let $\tau(G)$ denote the smallest size of a
  set of edges whose deletion makes $G$ triangle-free, and let
  $\aph(G)$ denote the largest size of a set of edges containing at
  most one edge from each triangle of $G$. Is it always the case that
  $\aph(G) + \tau(G) \leq n^2/4$?  We also consider a variant on this
  conjecture: if $\taub(G)$ is the smallest size of an edge set whose
  deletion makes $G$ bipartite, does the stronger inequality $\aph(G)
  + \taub(G) \leq n^2/4$ always hold?
  
  By considering the structure of a minimal counterexample to each
  version of the conjecture, we obtain two main results. Our first
  result states that any minimum counterexample to the original Erd\H
  os--Gallai--Tuza Conjecture has ``dense edge cuts'', and in
  particular has minimum degree greater than $n/2$.  This implies that
  the conjecture holds for all graphs if and only if it holds for all
  triangular graphs (graphs where every edge lies in a triangle).  Our
  second result states that $\aph(G) + \taub(G) \leq n^2/4$ whenever
  $G$ has no induced subgraph isomorphic to $K_4^-$, the graph
  obtained from the complete graph $K_4$ by deleting an edge.  Thus,
  the original conjecture also holds for such graphs.

  \smallskip\noindent\textbf{Keywords.} Triangle-free subgraph, bipartite subgraph, edge cut
\end{abstract}
\maketitle
\section{Introduction}
Given an $n$-vertex graph $G$, say that a set $A \subset E(G)$ is
\emph{triangle-independent} if it contains at most one edge from each
triangle of $G$, and say that $X \subset E(G)$ is a \emph{triangle
  edge cover} if $G \setminus X$ is triangle-free. Throughout this paper,
$\aph(G)$ denotes the maximum size of a triangle-independent set
of edges in $G$, while $\tau(G)$ denotes the minimum size of a triangle
edge cover in $G$.

Erd\H os~\cite{largebip} showed that every $n$-vertex graph $G$ has a
bipartite subgraph with at least $\sizeof{E(G)}/2$ edges, which implies that
$\tau(G) \leq \sizeof{E(G)}/2 \leq n^2/4$. Similarly, if $A$ is
triangle-independent, then the subgraph of $G$ with edge set $A$ is
clearly triangle-free; by Mantel's~Theorem, this implies that $\aph(G)
\leq n^2/4$. The Erd\H os--Gallai--Tuza conjecture is a common generalization
of these upper bounds.
\begin{conjecture}[Erd\H os--Gallai--Tuza \cite{EGT}]\label{coj:EGT}
  For every $n$-vertex graph $G$, $\aph(G) + \tau(G) \leq n^2/4$.
\end{conjecture}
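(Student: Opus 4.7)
The plan is to argue by contradiction via a minimum counterexample: suppose Conjecture~\ref{coj:EGT} fails, and let $G$ be a counterexample of minimum order, with $\sizeof{E(G)}$ then minimized as a tie-breaker. The hope is to extract so many structural constraints on $G$ that no such $G$ can exist. A preliminary observation is that the bound is very tight --- it is attained by $K_{\floor{n/2}, \ceil{n/2}}$ (with $\aph = \floor{n^2/4}$, $\tau = 0$) and by $K_4$ (with $\aph = \tau = 2$), among others --- so any approach must avoid even a constant-factor loss.

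The first step is to exploit the minimality to bound the local structure. For any $v \in V(G)$, by minimality $\aph(G-v) + \tau(G-v) \le (n-1)^2/4$. Combining this with $\aph(G) \le \aph(G-v) + d(v)$ (a triangle-independent set uses at most $d(v)$ edges at $v$) and $\tau(G) \le \tau(G-v) + \sizeof{E(G[N(v)])}$ (deleting all edges inside $N(v)$ kills every triangle through $v$) gives
\[
  \aph(G) + \tau(G) \le \tfrac{(n-1)^2}{4} + d(v) + \sizeof{E(G[N(v)])}.
\]
When $d(v)$ is small, or when $G[N(v)]$ is sparse, this contradicts $\aph(G) + \tau(G) > n^2/4$, so every vertex has large degree and a dense neighbourhood. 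With care one should push this to $\delta(G) > n/2$. A parallel argument on edge cuts: if $[S, \sbar]$ is a sparse cut, then $\aph(G) \le \aph(G[S]) + \aph(G[\sbar]) + \sizeof{[S,\sbar]}$ and $\tau(G) \le \tau(G[S]) + \tau(G[\sbar])$, and minimality applied to the two sides yields a contradiction unless the cut is dense. Finally, if some edge $e$ lies in no triangle, deleting $e$ drops $\aph$ by at most $1$ and does not change $\tau$, which should contradict the choice of $G$ as a counterexample with minimum edge count; hence $G$ is triangular.

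At this stage $G$ is triangular, has $\delta(G) > n/2$, and has no sparse edge cut, so $G$ is a very dense, highly connected, triangular graph. A natural subclass to handle first is $K_4^-$-free graphs, where all triangles are edge-disjoint: writing $t$ for the number of triangles, one has $\aph(G) = \sizeof{E(G)} - 2t$ and $\tau(G) = t$, so the conjecture collapses to $\sizeof{E(G)} - t \le n^2/4$, which one can attack by Mantel-type counting applied to the triangle-free residue obtained after one edge is removed from each triangle. The main obstacle --- and the reason the conjecture remains open --- is the general case, where multiple triangles share edges and the presence of $K_4^-$ intertwines $\aph$ and $\tau$ in ways that do not decompose triangle-by-triangle. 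Overcoming this without losing the tight constant $1/4$ seems to require a genuinely global argument, not merely a refinement of the local reductions above.
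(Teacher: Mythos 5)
The statement you were asked to prove is a conjecture, not a theorem: the paper does not prove it, and neither do you --- as your own final sentences concede. Judged as a reduction program, your proposal tracks the paper's actual contributions (dense cuts, $\delta(G) > n/2$, triangularity of a minimal counterexample, the $K_4^-$-free case) quite closely, but several individual steps are wrong as stated. First, the cut inequality $\tau(G) \leq \tau(G[S]) + \tau(G[\sbar])$ is false: take $G = K_3$ and $S$ a single vertex. Covering the triangles inside $S$ and inside $\sbar$ does nothing for crossing triangles. The paper's Lemma~\ref{lem:peel} repairs this by splitting the cut according to a maximum triangle-independent set $A$: the edges $B = A \cap [S,\sbar]$ are charged to the $\aph$ bound, while $[S,\sbar] - B$ covers every crossing triangle (each such triangle has two cut edges, at most one of which lies in $A$), so the two bounds together cost only $\sizeof{[S,\sbar]}$. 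This is exactly the ``care'' needed to reach $\delta(G) > n/2$; your vertex version, which pays $d(v) + \sizeof{E(G[N(v)])}$, is too lossy to get there, since $G[N(v)]$ is dense in the cases that matter. Second, your edge-minimality argument for triangularity does not close: if $e$ lies in no triangle then $\aph(G-e) + \tau(G-e) = \aph(G) + \tau(G) - 1$, which may equal exactly $\floor{n^2/4}$, so $G-e$ need not be a counterexample. The paper instead derives triangularity from $\delta(G) > n/2$ (two adjacent vertices of degree exceeding $n/2$ have a common neighbour).

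Third, your $K_4^-$-free analysis conflates two hypotheses. If $G$ has no $K_4^-$ \emph{subgraph}, then triangles are pairwise edge-disjoint, $\tau(G) = t$, $\aph(G) = \sizeof{E(G)} - 2t$, and Mantel's theorem applied to the triangle-free graph obtained by deleting one edge per triangle gives $\sizeof{E(G)} - t \leq n^2/4$; that is a clean and correct argument for that class. But the paper's theorem concerns graphs with no \emph{induced} $K_4^-$, a strictly larger class containing $K_4$ and $K_n$, where triangles overlap and your identities fail ($\aph(K_4) = 2$, not $6 - 8$). Handling that class requires the paper's additional machinery: the maximal-clique extension lemma and the triangle-free base case, both carried out for the stronger parameter $\taub$.
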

The conjecture is sharp, if true: consider the graphs $K_{n}$ and
$K_{n/2, n/2}$, where $n$ is even. We have $\aph(K_n) = n/2$ and
$\tau(K_n) = {n \choose 2} - n^2/4$, while $\aph(K_{n/2, n/2}) =
n^2/4$ and $\tau(K_{n/2, n/2}) = 0$.  In both cases, $\aph(G) +
\tau(G) = n^2/4$, but a different term dominates in each case.
More generally, the conjecture is sharp for any graph of the form
$K_{r_1, r_1} \join \cdots \join K_{r_t, r_t}$, a fact which follows from
the characterization of such graphs in \cite{Puleo} as the graphs 
achieving equality in the bound $\aph(G) \leq \frac{n^2}{2} - \sizeof{E(G)}$.

The original paper of Erd\H os, Gallai, and Tuza~\cite{EGT} considered
the conjecture only for \emph{triangular graphs}, which are graphs
such that every edge lies in a triangle. This version of the
conjecture was also stated by Erd\H os as Problem~17.2 of
\cite{OldNew}. Later formulations of the conjecture, such as
\cite{ErdosSelection} and \cite{tuza2001unsolved}, dropped the
triangularity requirement, and instead stated the conjecture for
general graphs; this discrepancy was pointed out by Grinberg on
MathOverflow~\cite{Grinberg}, who asked if the two formulations were
really equivalent. Our results in this paper imply that the two forms
of the conjecture are equivalent, settling Grinberg's question.

Throughout the paper, we use the term \emph{minimal counterexample} to
refer to a vertex-minimal counterexample, that is, a graph $G$ such
that the property in question holds for every proper induced subgraph
of $G$. When $S \subset V(G)$, we write $\sbar$ for the set $V(G) -
S$, and we write $[S, \sbar]$ for the \emph{edge cut} between $S$ and
$\sbar$, that is, the set of all edges with one endpoint in $S$ and
the other endpoint in $\sbar$.

In Section~\ref{sec:plaintau}, we prove that if $G$ is a minimal
counterexample to Conjecture~\ref{coj:EGT}, then for every nonempty proper vertex subset
$S$, the edge cut $[S, \sbar]$ has more than
$\sizeof{S}(n-\sizeof{S})/2$ edges. A small refinement of the argument
shows that $\delta(G) > n/2$ whenever $G$ is a minimal
counterexample. Thus, any minimal counterexample is a triangular
graph, so if Conjecture~\ref{coj:EGT} holds for triangular graphs,
then no counterexample exists.

We then consider the following stronger variant on
Conjecture~\ref{coj:EGT}, proposed by Lehel (see \cite{OldNew}) and
independently proposed by the author~\cite{Puleo}. Let $\taub(G)$
denote the smallest size of an edge set $X$ such that $G-X$ is
bipartite; note that $\taub(G) \geq \tau(G)$.
\begin{conjecture}\label{coj:puleo}
  For every $n$-vertex graph $G$, $\aph(G) + \taub(G) \leq n^2/4$.  
\end{conjecture}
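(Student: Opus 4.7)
I would attempt Conjecture~\ref{coj:puleo} with the same minimal-counterexample framework that drives the main results for Conjecture~\ref{coj:EGT}. Suppose $G$ is a vertex-minimal counterexample: $\aph(G) + \taub(G) > n^2/4$, but every proper induced subgraph $G'$ on $n'$ vertices satisfies $\aph(G') + \taub(G') \leq (n')^2/4$. For any nonempty proper $S \subsetneq V(G)$, restricting an optimal triangle-independent set of $G$ to the two sides gives $\aph(G) \leq \aph(G[S]) + \aph(G[\sbar]) + \sizeof{[S,\sbar]}$. For the bipartization term, take optimal bipartitions $(A_S, B_S)$ of $G[S]$ and $(A_{\sbar}, B_{\sbar})$ of $G[\sbar]$; the two natural gluings of these into a bipartition of $G$ split the edges of $[S,\sbar]$ complementarily between ``inside a part'' and ``across the parts'', so the better gluing sends at most $\sizeof{[S,\sbar]}/2$ cut edges to the interior of a part. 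This yields $\taub(G) \leq \taub(G[S]) + \taub(G[\sbar]) + \sizeof{[S,\sbar]}/2$.

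Combining these bounds with the minimality hypothesis gives
\[
\aph(G) + \taub(G) \;\leq\; \frac{\sizeof{S}^2}{4} + \frac{\sizeof{\sbar}^2}{4} + \frac{3\sizeof{[S,\sbar]}}{2},
\]
and the counterexample assumption $\aph(G) + \taub(G) > n^2/4$ forces $\sizeof{[S,\sbar]} > \sizeof{S}\sizeof{\sbar}/3$ for every nonempty proper $S$. Taking $\sizeof{S} = 1$ yields $\delta(G) > (n-1)/3$, which --- unlike the $\delta(G) > n/2$ bound available for Conjecture~\ref{coj:EGT} --- is not strong enough to force $G$ to be triangular. I would then try to sharpen the decomposition by coupling the choice of triangle-independent set with the bipartization: for example, arguing that cut edges contributing to an optimal $I$ can be charged against the ``bad'' cut edges of the gluing, so that the coefficient $3/2$ improves toward $1$.

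The hardest step is precisely this coupling. The estimate $\taub(G) \leq \taub(G[S]) + \taub(G[\sbar]) + \sizeof{[S,\sbar]}/2$ is essentially sharp in isolation, because a bipartization is a global constraint: an odd cycle using several cut edges must be broken, and there is no cheap way to amortize such edges against the bounds from the two sides. A plausible route --- already suggested by the abstract --- is to first establish the $K_4^-$-free case by direct structural analysis, and then argue that in a minimal counterexample the unavoidable presence of induced $K_4^-$'s allows one to push the reduction further by local surgery. I would expect the eventual proof, if Conjecture~\ref{coj:puleo} holds in full, to combine a dense-edge-cut argument of the flavor above with a much tighter joint treatment of the optimal triangle-independent set and the optimal bipartition.
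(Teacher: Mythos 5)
The statement you are trying to prove is an open conjecture: the paper does not prove it either, and your proposal, as you yourself acknowledge, does not close it. So there is a genuine gap in the literal sense --- no proof of $\aph(G)+\taub(G)\leq n^2/4$ for all $G$ is produced --- and that gap is not a fixable oversight but the actual open problem. What can be assessed is whether your partial analysis matches or falls short of what the paper extracts from the same minimal-counterexample framework, and here there is a concrete loss. You bound the triangle-independent contribution of the cut by the trivial $\aph(G)\leq\aph(G[S])+\aph(G[\sbar])+\sizeof{[S,\sbar]}$, which, combined with the (correct) bipartization bound $\taub(G)\leq\taub(G[S])+\taub(G[\sbar])+\frac{1}{2}\sizeof{[S,\sbar]}$, gives a coefficient of $3/2$ on the cut and only $\delta(G)>(n-1)/3$. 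The paper instead keeps the cut term in the form $\frac{1}{2}\sizeof{[S,\sbar]}+\sizeof{[S,\sbar]\cap A}$ and then exploits the triangle-independence of $A$ directly: for each $v\in\sbar$, the neighborhood $N_A(v)=\{w\st vw\in A\}$ is an independent set of $G$, so $\sizeof{N_A(v)\cap S}\leq t$ where $t$ is the independence number of $G[S]$, giving $\sizeof{[S,\sbar]\cap A}\leq t(n-\sizeof{S})$ and hence $\sizeof{[S,\sbar]}>(\sizeof{S}-2t)(n-\sizeof{S})$. This is precisely the ``coupling'' you flag as the hardest step, and it is cheaper than you anticipate: no joint optimization of $A$ with the bipartition is needed, only the observation about $N_A(v)$.

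Your instinct to route through the $K_4^-$-free case is exactly what the paper does with this sharper lemma: applying it to a maximum clique $S$ (so $t=1$) produces a vertex of $\sbar$ with at least $\sizeof{S}-1$ neighbors in $S$, hence an induced $K_{\sizeof{S}+1}^-$, and together with the triangle-free base case this proves the conjecture for graphs with no induced $K_4^-$. But the paper stops there too; the ``local surgery'' you hope would leverage the unavoidable induced copies of $K_4^-$ in a general minimal counterexample is not supplied, and the full conjecture remains open. If you revise, state clearly that you are proving structural properties of a minimal counterexample and the $K_4^-$-free case, not the conjecture itself, and replace your decoupled estimate with the $N_A(v)$ argument, since the $3/2$-coefficient version is strictly weaker and does not suffice even for the partial results.
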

A partial result~\cite{Puleo} towards Conjecture~\ref{coj:puleo}, and
thus towards Conjecture~\ref{coj:EGT}, states that
$\aph(G) + \taub(G) \leq 5n^2/16$ for every graph $G$. In
Section~\ref{sec:biptau}, we study the properties of a minimal
counterexample to Conjecture~\ref{coj:puleo}, obtaining a ``dense
cuts'' result similar to that of Section~\ref{sec:plaintau} (but
somewhat more complicated to state). This theorem implies that if $G$
has no induced subgraph isomorphic to $K_4^-$, then
$\aph(G) + \taub(G) \leq n^2/4$.  Although this class of graphs is
highly constrained, it includes $K_n$ and $K_{n/2,n/2}$, the two extremes
of the family of motivating sharpness examples.

\section{Dense Cuts in a Minimal Counterexample}\label{sec:plaintau}
Erd\H os, Gallai, and Tuza~\cite{EGT} showed that $\aph(G) + \tau(G)
\leq \sizeof{E(G)}$ for all $G$, via the following argument: if $A
\subset E(G)$ is triangle-independent, then $E(G) - A$ contains at
least $2$ edges from each triangle of $G$, so $E(G) - A$ is a triangle
edge cover. This argument is ``global'', dealing with all edges in $G$;
we ``localize'' it, dealing only with edges in some edge cut
$[S, \sbar]$ for $S \subset V(G)$.

To avoid clutter, we write $\aphtau(G)$ for the sum $\aph(G) + \tau(G)$.
\begin{lemma}\label{lem:peel}
  If $S$ is nonempty proper subset of $V(G)$, then
  \[ \aphtau(G) \leq \aphtau(G[S]) + \aphtau(G[\sbar]) + \sizeof{[S, \sbar]}. \]
\end{lemma}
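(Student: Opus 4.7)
The plan is to start with a maximum triangle-independent set $A$ of $G$ and use it both to bound $\aph(G)$ and, more importantly, to guide the construction of a triangle edge cover of $G$ whose size splits nicely across $S$, $\sbar$, and the cut. Write $A_S = A \cap E(G[S])$, $A_{\bar S} = A \cap E(G[\sbar])$, and $A_C = A \cap [S,\sbar]$. Each $A_S$ and $A_{\bar S}$ is triangle-independent in the corresponding induced subgraph (since triangles of $G[S]$ and $G[\sbar]$ are triangles of $G$), so
\[
\aph(G) = |A_S| + |A_{\bar S}| + |A_C| \leq \aph(G[S]) + \aph(G[\sbar]) + |A_C|.
\]

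For the triangle edge cover, I would take minimum triangle edge covers $X_S$ and $X_{\bar S}$ of $G[S]$ and $G[\sbar]$, and set
\[
X = X_S \cup X_{\bar S} \cup \bigl([S,\sbar] \setminus A_C\bigr).
\]
The claim is that $X$ is a triangle edge cover of $G$. For a triangle $T$ with all vertices in $S$ or all in $\sbar$, one edge of $T$ lies in $X_S$ or $X_{\bar S}$. The interesting case is a triangle $T$ with exactly two vertices on one side, so that exactly two edges of $T$ lie in $[S,\sbar]$; by triangle-independence of $A$, these two cut-edges cannot both lie in $A$, so at least one of them lies in $[S,\sbar] \setminus A_C \subset X$. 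Hence
\[
\tau(G) \leq |X| \leq \tau(G[S]) + \tau(G[\sbar]) + |[S,\sbar]| - |A_C|.
\]

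Adding the two bounds, the $|A_C|$ terms cancel and we recover exactly the desired inequality
\[
\aphtau(G) \leq \aphtau(G[S]) + \aphtau(G[\sbar]) + |[S,\sbar]|.
\]
The only step requiring real thought is the verification that $X$ covers the \emph{mixed} triangles (those with vertices on both sides of the cut); this is where the triangle-independence of $A$ is used in an essential way, and is the heart of the ``localization'' of the Erd\H os--Gallai--Tuza argument that $\aphtau(G)\leq |E(G)|$. Once that observation is in hand, the rest is bookkeeping.
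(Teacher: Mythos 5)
Your proof is correct and follows essentially the same route as the paper: the same decomposition of a maximum triangle-independent set $A$ across $S$, $\sbar$, and the cut, and the same construction of a triangle edge cover from minimum covers of the two induced subgraphs together with the non-$A$ cut edges, with the key observation that a mixed triangle has two cut edges of which at most one lies in $A$. The $|A_C|$ cancellation at the end is exactly how the paper concludes as well.
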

\begin{proof}
  Let $A \subset E(G)$ be a largest triangle-independent set in $G$,
  let $G_1 = G[S]$, and let $G_2 = G[\sbar]$. For $i \in
  \{1,2\}$, let $A_i = A \cap E(G_i)$, so that $A_i$ is a
  triangle-independent set in $G_i$, and let $B = A \cap [S,
  \sbar]$. Since $\sizeof{A_i}$ is a lower bound on
  $\aph(G_i)$, we have
  \[ \aph(G) = \sizeof{A} = \sizeof{A_1} + \sizeof{A_2} + \sizeof{B}
  \leq \aph(G_1) + \aph(G_2) + \sizeof{B}. \]
  Next, let $X_i$ be a minimum triangle edge cover in $G_i$ for $i \in
  \{1,2\}$, so that $\sizeof{X_i} = \tau(G_i)$, and let $Y = [S,
  \sbar] - B$. We claim that $X_1 \cup X_2 \cup Y$ is
  a triangle edge cover in $G$. Clearly $X_i$ covers all triangles
  contained in $V(G_i)$, so it suffices to show that $Y$ covers
  all triangles intersecting both $S$ and $\sbar$. If
  $T$ is such a triangle, then two edges of $T$ lie in $[S, \sbar]$.
  Since $B \subset A$ and $A$ is triangle-independent, at most one of these edges is
  contained in $B$; the other lies in $Y$. Hence $X_1 \cup X_2 \cup Y$
  is a triangle edge cover in $G$, and we conclude that  
  \[ \tau(G) \leq \sizeof{X_1} + \sizeof{X_2} + \sizeof{Y}
  = \tau(G_1) + \tau(G_2) + \big(\sizeof{[S, \sbar]} - \sizeof{B}\big). \]
  Combining the bounds on $\aph(G)$ and $\tau(G)$ yields the desired inequality.
\end{proof}
\begin{theorem}\label{thm:ctxcut}
  Let $G$ be a minimal counterexample to Conjecture~\ref{coj:EGT}. If
  $S$ is a nonempty proper subset of $V(G)$, then $\sizeof{[S,
    \sbar]} > \frac{1}{2}\sizeof{S}(n - \sizeof{S})$, where $n
  = \sizeof{V(G)}$.
\end{theorem}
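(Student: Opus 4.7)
The plan is to combine Lemma~\ref{lem:peel} with the minimality hypothesis and then do a short algebraic rearrangement. Fix a nonempty proper subset $S \subset V(G)$, and let $n = |V(G)|$. Since $G$ is a vertex-minimal counterexample, both $G[S]$ and $G[\sbar]$ are proper induced subgraphs and therefore satisfy Conjecture~\ref{coj:EGT}; that is,
\[ \aphtau(G[S]) \leq \frac{\sizeof{S}^2}{4} \quad\text{and}\quad \aphtau(G[\sbar]) \leq \frac{(n-\sizeof{S})^2}{4}. \]
Meanwhile, because $G$ itself is a counterexample, we have the strict inequality $\aphtau(G) > n^2/4$.

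Next, I would apply Lemma~\ref{lem:peel} to the set $S$ to obtain
\[ \aphtau(G) \leq \aphtau(G[S]) + \aphtau(G[\sbar]) + \sizeof{[S, \sbar]}, \]
and then chain this with the two bounds above and the strict counterexample inequality. This gives
\[ \frac{n^2}{4} < \frac{\sizeof{S}^2}{4} + \frac{(n-\sizeof{S})^2}{4} + \sizeof{[S, \sbar]}. \]
The final step is the algebraic identity $n^2 = \sizeof{S}^2 + (n-\sizeof{S})^2 + 2\sizeof{S}(n-\sizeof{S})$, which rearranges the previous display into
\[ \sizeof{[S, \sbar]} > \frac{\sizeof{S}(n-\sizeof{S})}{2}, \]
as desired.

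There is essentially no obstacle here: the proof is a direct pairing of Lemma~\ref{lem:peel} with the definition of a minimal counterexample, followed by the observation that the ``balanced'' expression $\sizeof{S}^2/4 + (n-\sizeof{S})^2/4$ falls short of $n^2/4$ by exactly $\sizeof{S}(n-\sizeof{S})/2$. The only mild subtlety is ensuring that $G[S]$ and $G[\sbar]$ genuinely qualify as subjects of the minimality hypothesis, which they do because $S$ is taken to be a \emph{nonempty proper} subset, so each induced subgraph has strictly fewer than $n$ vertices.
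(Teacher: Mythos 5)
Your proposal is correct and matches the paper's proof essentially verbatim: both apply Lemma~\ref{lem:peel} to $S$, invoke minimality to bound $\aphtau(G[S])$ and $\aphtau(G[\sbar])$ by $\sizeof{S}^2/4$ and $(n-\sizeof{S})^2/4$, and finish with the identity $n^2 = \sizeof{S}^2 + (n-\sizeof{S})^2 + 2\sizeof{S}(n-\sizeof{S})$. No differences worth noting.
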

\begin{proof}
  Let $G_1 = G[S]$ and let $G_2 = G[\sbar]$. Since $G$ is a
  minimal counterexample, we have
  \begin{align*}
    \aph(G_1) + \tau(G_1) &\leq \sizeof{S}^2/4, \\
    \aph(G_2) + \tau(G_2) &\leq (n - \sizeof{S})^2/4.
  \end{align*}
  By Lemma~\ref{lem:peel}, it follows that
  \[ \aph(G) + \tau(G) \leq \frac{n^2}{4} - \frac{\sizeof{S}(n-\sizeof{S})}{2} + \sizeof{[S, \sbar]}. \]
  Since $\aph(G) + \tau(G) > n^2/4$, the claim follows.
\end{proof}
Applying Theorem~\ref{thm:ctxcut} to a set consisting precisely of a
vertex of minimum degree yields the lower bound $\delta(G) >
(n-1)/2$.
Parity considerations allow us to obtain the stronger bound
$\delta(G) > n/2$.
\begin{theorem}\label{lem:mindeg}
  If $G$ is a minimal counterexample to Conjecture~\ref{coj:EGT},
  then $\delta(G) > n/2$, where $n = \sizeof{V(G)}$.
\end{theorem}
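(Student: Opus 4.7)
The plan is to refine the argument used to deduce $\delta(G) > (n-1)/2$ from Theorem~\ref{thm:ctxcut}, exploiting the integrality of $\aphtau$ in the one case where that argument falls short.

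First I would apply Theorem~\ref{thm:ctxcut} with $S = \{v\}$, where $v$ is a vertex of minimum degree. Since $\sizeof{[S,\sbar]} = \deg(v) = \delta(G)$, we obtain $\delta(G) > (n-1)/2$. When $n$ is odd this already forces $\delta(G) \geq (n+1)/2 > n/2$, and we are done. So the remaining case is $n$ even, where the inequality only gives $\delta(G) \geq n/2$, and we must rule out equality.

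For the even case, I would argue by contradiction: suppose $\delta(G) = n/2$, and let $v$ be a vertex realizing this. Applying Lemma~\ref{lem:peel} with $S = \{v\}$, the induced subgraph $G[S]$ has no edges, so $\aphtau(G[S]) = 0$, and $\sizeof{[S,\sbar]} = n/2$. By minimality, $\aphtau(G - v) \leq (n-1)^2/4$. Here is where integrality enters: since $n-1$ is odd, $(n-1)^2/4$ is not an integer, and $\aphtau(G-v)$ is an integer, so in fact $\aphtau(G-v) \leq \floor{(n-1)^2/4} = (n^2 - 2n)/4$. Plugging into Lemma~\ref{lem:peel} yields $\aphtau(G) \leq (n^2-2n)/4 + n/2 = n^2/4$, contradicting the assumption that $G$ is a counterexample.

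The main obstacle is essentially just recognizing that the parity gap in Theorem~\ref{thm:ctxcut} can be closed by taking a floor at the right moment; no new structural idea is needed beyond Lemma~\ref{lem:peel}. Once one sets up the contradiction carefully with $S = \{v\}$, the arithmetic is immediate.
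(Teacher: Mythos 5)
Your proof is correct and follows essentially the same route as the paper: both apply Lemma~\ref{lem:peel} with $S=\{v\}$ for a minimum-degree vertex $v$ and, in the even case, use the integrality of $\aphtau(G-v)$ to sharpen $(n-1)^2/4$ to $(n^2-2n)/4$. The only cosmetic differences are that you invoke Theorem~\ref{thm:ctxcut} for the odd case and frame the even case as a contradiction with $\delta(G)=n/2$, whereas the paper runs both parities directly from the inequality $n^2/4 < \aphtau(G-v) + d(v)$.
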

\begin{proof}
  Let $v$ be a vertex of minimum degree in $G$, and let $G_0 = G-v$.
  By minimality, $\aph(G_0) + \tau(G_0) \leq (n-1)^2/4$. By
  Lemma~\ref{lem:peel}, since $G$ is a counterexample we have
  \begin{equation}
    \label{eqn:cat}
    \frac{n^2}{4} < \aph(G_0) + \tau(G_0) + d(v).
  \end{equation}
  We split into cases according to the parity of $n$.

  \caze{1}{$n$ is odd.} Using Inequality~\eqref{eqn:cat}, we have
  \[ \frac{2n - 1}{4} = \frac{n^2 - (n-1)^2}{4} < d(v), \]
  so $d(v) > \frac{n}{2} - \frac{1}{4}$, which implies $d(v) > n/2$
  since $n$ is odd.

  \caze{2}{$n$ is even.}
  Since $\aph(G_0) + \tau(G_0)$ is
  an integer, the condition $\aph(G_0) + \tau(G_0) \leq (n-1)^2/4$
  implies
  \[ \aph(G_0) + \tau(G_0) \leq \frac{n^2 - 2n}{4} = \frac{n^2}{4} - \frac{n}{2}. \]
  Therefore, Inequality~\eqref{eqn:cat} implies
  \[ \frac{n^2}{4} < \frac{n^2}{4} - \frac{n}{2} + d(v), \]
  which again easily yields $d(v) > n/2$.
\end{proof}
Since $\delta(G) > n/2$ implies that $G$ is triangular, we have the following
corollary.
\begin{corollary}\label{cor:triang}
  If Conjecture~\ref{coj:EGT} holds for all triangular graphs, then Conjecture~\ref{coj:EGT}
  holds for all graphs.
\end{corollary}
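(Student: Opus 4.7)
The plan is to prove the contrapositive: starting from any counterexample to Conjecture~\ref{coj:EGT}, I will produce a triangular one. The natural choice is a \emph{minimal} counterexample $G$, since Theorem~\ref{lem:mindeg} already supplies strong structural information about such a graph.

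First, invoke Theorem~\ref{lem:mindeg} to get $\delta(G) > n/2$, where $n = \sizeof{V(G)}$. It then suffices to verify the claim made in the paragraph preceding the corollary, namely that $\delta(G) > n/2$ forces every edge of $G$ to lie in a triangle. For this, pick an arbitrary edge $uv \in E(G)$ and apply the inclusion--exclusion estimate
$\sizeof{N(u) \cap N(v)} \geq d(u) + d(v) - \sizeof{N(u) \cup N(v)} \geq 2\delta(G) - n > 0$,
using that $N(u) \cup N(v) \subset V(G)$. Any vertex $w$ in this intersection satisfies $w \neq u$ and $w \neq v$ (since by convention $u \notin N(u)$ and $v \notin N(v)$), so $uvw$ is a triangle through $uv$. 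Hence every edge of $G$ is in a triangle, i.e., $G$ is triangular.

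Combining these observations, the minimal counterexample $G$ is itself a triangular counterexample, contradicting the hypothesis that Conjecture~\ref{coj:EGT} holds for all triangular graphs. I do not anticipate any real obstacle here: both steps are essentially immediate once Theorem~\ref{lem:mindeg} is in hand, and the only detail worth highlighting is that the common neighbor $w$ produced above is automatically distinct from $u$ and $v$, so it genuinely witnesses a triangle on the edge $uv$.
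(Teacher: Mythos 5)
Your proposal is correct and follows exactly the paper's route: invoke Theorem~\ref{lem:mindeg} to get $\delta(G) > n/2$ for a minimal counterexample, observe that this forces every edge to lie in a triangle, and conclude. The paper leaves the inclusion--exclusion step implicit, but your argument fills in precisely that standard detail.
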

\section{Dense Cuts in the $\taub$ Variant}\label{sec:biptau}
In this section, we consider Conjecture~\ref{coj:puleo}, which deals
with the sum $\aph(G) + \taub(G)$. We again focus on edge cuts in a
minimal counterexample to the conjecture $\aph(G) + \taub(G) \leq
n^2/4$. The development is analogous to Section~\ref{sec:plaintau},
with some differences.

For shorthand, let $\aphtaub(G) = \aph(G) + \taub(G)$.

\begin{lemma}\label{lem:denseboth}
  Let $G$ be a graph, and let $A$ be a triangle-independent set of edges in $G$.
  If $S$ is a nonempty proper subset of $V(G)$, then  
  \[
  \aphtaub(G) \leq \aphtaub(G[S]) + \aphtaub(G[\sbar]) + \frac{1}{2}\sizeof{[S, \sbar]} + \sizeof{[S, \sbar] \cap A}.    
  \]    
\end{lemma}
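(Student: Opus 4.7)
The proof parallels that of Lemma~\ref{lem:peel}: I will bound $\aph(G)$ and $\taub(G)$ separately and sum. The conclusion is meaningful only when $|A| = \aph(G)$, so I work under the implicit assumption that $A$ is a \emph{maximum} triangle-independent set (the bound genuinely fails for small triangle-independent sets, e.g.\ $A = \emptyset$ in $G = K_4$ with $|S|=1$, so some such assumption is needed).

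For the $\aph$ side, the argument of Lemma~\ref{lem:peel} applies verbatim. Set $A_1 = A \cap E(G[S])$, $A_2 = A \cap E(G[\sbar])$, and $B = A \cap [S,\sbar]$. Each $A_i$ is triangle-independent in $G[S_i]$, so $|A_i| \leq \aph(G[S_i])$, giving
\[ \aph(G) = |A| = |A_1| + |A_2| + |B| \leq \aph(G[S]) + \aph(G[\sbar]) + |B|. \]

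For the $\taub$ side, the new ingredient is a standard ``flipping'' trick. Pick minimum bipartite edge covers $X_i$ of each $G[S_i]$ together with proper 2-colorings of $G[S_i] - X_i$. Concatenating these gives a 2-coloring of $V(G)$ under which each edge of $[S, \sbar]$ is either monochromatic or bichromatic. Swapping the two color classes on $\sbar$ interchanges these two sets, so one of the two choices yields at most $\tfrac{1}{2}|[S, \sbar]|$ monochromatic cut edges. Deleting $X_1 \cup X_2$ along with the monochromatic cut edges makes $G$ bipartite, and so
\[ \taub(G) \leq \taub(G[S]) + \taub(G[\sbar]) + \tfrac{1}{2}\sizeof{[S, \sbar]}. \]

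Adding the two bounds yields exactly the claimed inequality. The only step not already contained in Lemma~\ref{lem:peel} is the flipping argument for $\taub$; this is a well-known folklore trick for bipartization and presents no real obstacle. The subtler point is merely the interpretation of $A$, which should be read as a maximum triangle-independent set.
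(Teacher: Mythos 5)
Your proof is correct and follows essentially the same route as the paper: the $\aph$ bound by splitting $A$ across $G[S]$, $G[\sbar]$, and the cut, and the $\taub$ bound by comparing the two ways of joining optimal bipartitions of $G[S]$ and $G[\sbar]$ (your ``flipping'' argument is exactly what the paper means by ``the two different ways to join the partite sets''). Your observation that $A$ must be read as a \emph{maximum} triangle-independent set is a fair and correct clarification --- the paper's own first inequality tacitly uses $\aph(G)=\sizeof{A}$, and the lemma is applied later only with such an $A$.
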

\begin{proof}
  Clearly, $\aph(G) \leq \aph(G[S]) + \aph(G[\sbar]) +
  \sizeof{[S, \sbar] \cap A}$, since $A \cap G[S]$ and $A \cap
  G[\sbar]$ are triangle-independent sets in $G[S]$ and
  $G[\sbar]$ respectively.  The bound $\taub(G) \leq
  \taub(G[S]) + \taub(G[\sbar]) + \frac{1}{2}\sizeof{[S,
    \sbar]}$ follows by considering the two different ways to
  join the partite sets of a largest bipartite subgraph in $G[S]$ with
  those of one in $G[\sbar]$.
\end{proof}
Since the conclusion of Lemma~\ref{lem:denseboth} deals with both the
graph $G$ and a triangle-independent set $A$, it is difficult to draw
blanket conclusions about the structure of a minimal counterexample
$G$. However, we can draw some conclusions if we impose restrictions
on the structure of $G[S]$.
\begin{lemma}\label{lem:densemin}
  Let $G$ be a minimal counterexample to Conjecture~\ref{coj:puleo},
  and let $S$ be a nonempty proper subset of $V(G)$. If $G[S]$ has independence number $t$,
  then
  \[ \sizeof{[S, \sbar]} > (\sizeof{S} - 2t)(n - \sizeof{S}), \]  
  where $n = \sizeof{V(G)}$.
\end{lemma}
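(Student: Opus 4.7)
The plan is to combine Lemma~\ref{lem:denseboth} with the minimality hypothesis and a counting argument that uses the independence number constraint to bound $\sizeof{[S,\sbar] \cap A}$. Fix a maximum triangle-independent set $A$ in $G$. Since $S$ is a nonempty proper subset, both $G[S]$ and $G[\sbar]$ are proper induced subgraphs of $G$, hence satisfy Conjecture~\ref{coj:puleo} by minimality:
\[
\aphtaub(G[S]) \leq \frac{\sizeof{S}^2}{4}, \qquad \aphtaub(G[\sbar]) \leq \frac{(n-\sizeof{S})^2}{4}.
\]
Substituting into Lemma~\ref{lem:denseboth}, using the identity $\sizeof{S}^2 + (n-\sizeof{S})^2 = n^2 - 2\sizeof{S}(n-\sizeof{S})$, and invoking $\aphtaub(G) > n^2/4$ yields
\[
\frac{\sizeof{S}(n-\sizeof{S})}{2} < \frac{1}{2}\sizeof{[S,\sbar]} + \sizeof{[S,\sbar] \cap A},
\]
or equivalently $\sizeof{S}(n-\sizeof{S}) < \sizeof{[S,\sbar]} + 2\sizeof{[S,\sbar] \cap A}$.

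The crux of the argument is then to bound $\sizeof{[S,\sbar] \cap A}$ using the independence number $t$ of $G[S]$. For each $v \in \sbar$, let $N_v = \{u \in S \st uv \in A\}$ be the set of $A$-neighbors of $v$ inside $S$. If two vertices $u, w \in N_v$ were adjacent in $G[S]$, then $\{u,v,w\}$ would form a triangle with two edges ($uv$ and $vw$) in $A$, contradicting the triangle-independence of $A$. Hence $N_v$ is an independent set in $G[S]$, so $\sizeof{N_v} \leq t$. Summing over $v \in \sbar$ gives $\sizeof{[S,\sbar] \cap A} \leq t(n-\sizeof{S})$.

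Plugging this bound into the inequality above gives $\sizeof{S}(n-\sizeof{S}) < \sizeof{[S,\sbar]} + 2t(n-\sizeof{S})$, which rearranges to the desired $(\sizeof{S} - 2t)(n - \sizeof{S}) < \sizeof{[S,\sbar]}$. I do not anticipate a substantive obstacle here: the minimality step and Lemma~\ref{lem:denseboth} set up the framework mechanically, and the only genuinely new ingredient is the triangle-independence observation that each vertex of $\sbar$ has at most $t$ $A$-neighbors in $S$. The inequality is trivially true when $\sizeof{S} \leq 2t$, so the interesting regime is $\sizeof{S} > 2t$, and the argument handles both cases uniformly.
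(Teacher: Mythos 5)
Your proof is correct and follows essentially the same route as the paper: apply minimality to bound $\aphtaub(G[S])$ and $\aphtaub(G[\sbar])$, feed these into Lemma~\ref{lem:denseboth}, and bound $\sizeof{[S,\sbar]\cap A}\leq t(n-\sizeof{S})$ by observing that each vertex of $\sbar$ has an independent set of $A$-neighbors in $S$. Your choice of a \emph{maximum} triangle-independent set $A$ is if anything slightly more careful than the paper's phrasing, which takes $A$ arbitrary.
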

\begin{proof}
  Let $A$ be any triangle-independent set of edges in $G$, and for
  every $v \in V(G)$, let $N_A(v) = \{w \in V(G) \st vw \in
  A\}$.
  Since $A$ is triangle-independent, the set $N_A(v)$ is independent
  in $G$ for every vertex $v$. Thus, $\sizeof{N_A(v) \cap S} \leq t$
  for each $v \in \sbar$, which yields
  $\sizeof{[S, \sbar] \cap A} \leq t(n-\sizeof{S})$.

  Since $G$ is a minimal graph satisfying $\aph(G) + \taub(G) >
  \sizeof{V(G)}^2/4$, Lemma~\ref{lem:denseboth} implies that
  \[ \frac{1}{2}\sizeof{[S, \sbar]} + \sizeof{[S, \sbar] \cap A} > \frac{\sizeof{S}(n - \sizeof{S})}{2.} \]
  Hence
  \[ \sizeof{[S, \sbar]} > \sizeof{S}(n - \sizeof{S}) - 2\sizeof{[S, \sbar] \cap A} \geq (\sizeof{S}-2t)(n - \sizeof{S}). \]
\end{proof}
Lemma~\ref{lem:densemin} allows us to prove a general result
about maximal cliques in minimal counterexamples.
\begin{lemma}\label{lem:extendclique}
  If $G$ is a minimal counterexample to Conjecture~\ref{coj:puleo}
  and $S$ is a maximal clique in $G$, then $S$ is contained in an
  induced copy of $K_{\sizeof{S}+1}^-$.
\end{lemma}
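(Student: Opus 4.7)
The plan is to apply Lemma~\ref{lem:densemin} directly to the maximal clique $S$. Since $S$ induces a clique, its independence number is $t = 1$, so the lemma yields the edge-cut bound
\[ \sizeof{[S, \sbar]} > (\sizeof{S} - 2)(n - \sizeof{S}). \]
This is the only nontrivial ingredient; from here everything is just counting.

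Next I would exploit the maximality of $S$ to bound each vertex's contribution to the cut. Since $S$ is a maximal clique, no vertex of $\sbar$ can be adjacent to every vertex of $S$, so $\sizeof{N(v) \cap S} \leq \sizeof{S}-1$ for each $v \in \sbar$. Summing over $v \in \sbar$ and comparing with the lower bound from Lemma~\ref{lem:densemin}: if we had $\sizeof{N(v) \cap S} \leq \sizeof{S}-2$ for \emph{every} $v \in \sbar$, then the total edge count would be at most $(\sizeof{S}-2)(n-\sizeof{S})$, contradicting the strict inequality. Therefore some $v \in \sbar$ satisfies $\sizeof{N(v) \cap S} = \sizeof{S}-1$, and $G[S \cup \{v\}]$ is $K_{\sizeof{S}+1}$ with exactly one edge deleted, giving the desired induced copy of $K_{\sizeof{S}+1}^-$.

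There is no real obstacle in the main argument; Lemma~\ref{lem:densemin} does all the heavy lifting and the counting is a one-line pigeonhole. The only thing worth checking is the degenerate case $\sizeof{S}=1$, where the bound from Lemma~\ref{lem:densemin} becomes vacuous. But a maximal clique of size $1$ is an isolated vertex, and then any other vertex $v$ of $G$ (which exists whenever $n \geq 2$) yields an induced $K_2^-$ on $S \cup \{v\}$, so the conclusion is immediate in that case.
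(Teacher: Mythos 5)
Your argument is essentially identical to the paper's: apply Lemma~\ref{lem:densemin} with $t=1$, then use maximality of $S$ plus pigeonhole/averaging to find $v \in \sbar$ with exactly $\sizeof{S}-1$ neighbors in $S$. The one step you skip is verifying that $S$ is a \emph{proper} subset of $V(G)$, which is a stated hypothesis of Lemma~\ref{lem:densemin} and is also needed for $\sbar$ to be nonempty (if $G$ were complete, the conclusion itself would fail, since there would be no vertex available to form $K_{\sizeof{S}+1}^-$). The paper dispatches this by observing that the conjecture holds for complete graphs, so a minimal counterexample cannot be complete; you should add that one line. Your handling of the degenerate case $\sizeof{S}=1$ is fine but unnecessary, since the averaging argument already produces the required vertex whenever the cut bound is nonvacuous, and for $\sizeof{S}\in\{1,2\}$ the conclusion follows directly from the same pigeonhole step.
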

\begin{proof}
  Since Conjecture~\ref{coj:EGT} is known to hold for complete graphs,
  $S \neq V(G)$. By Lemma~\ref{lem:densemin}, we have
  $\sizeof{[S, \sbar]} > (\sizeof{S} - 2)(n-\sizeof{S})$.  Thus,
  $\sbar$ contains a vertex $v$ such that
  $\sizeof{N(v) \cap S} \geq \sizeof{S} - 1$.  Since $S$ is maximal,
  $G[S \cup \{v\}] \iso K_{\sizeof{S}+1}^-$.
\end{proof}
Before proving that Conjecture~\ref{coj:puleo} holds for graphs
with no induced copy of $K_4^-$, we prove that it holds for all
triangle-free graphs. (This is not trivial, since it is possible
that $\taub(G) > 0$ even though $G$ is triangle-free.)
\begin{lemma}\label{lem:trifree}
  If $G$ is a triangle-free $n$-vertex graph, then $\aph(G) + \taub(G) \leq n^2/4$.
\end{lemma}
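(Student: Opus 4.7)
Since $G$ is triangle-free, every subset of $E(G)$ is triangle-independent, so $\aph(G) = \sizeof{E(G)}$; the claim reduces to $\sizeof{E(G)} + \taub(G) \leq n^2/4$. The plan is to prove this by induction on $n$. The trivial base cases and the bipartite case (where $\taub(G) = 0$ and Mantel's theorem applies) are immediate.

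For the inductive step, suppose $G$ is triangle-free and non-bipartite. The main tool is Lemma~\ref{lem:denseboth}, applied with $A = E(G)$---which is a maximum triangle-independent set of $G$ since $G$ is triangle-free---and with $S = \{v\}$ for a vertex $v$ of minimum degree. Since $\aphtaub(G[\{v\}]) = 0$ and $\sizeof{[S, \sbar]} = d(v)$, this yields
\[
\aphtaub(G) \leq \aphtaub(G - v) + \tfrac{3}{2}\,d(v) \leq \tfrac{(n-1)^2}{4} + \tfrac{3}{2}\,d(v),
\]
using the inductive hypothesis on the triangle-free graph $G - v$. When $d(v) \leq (2n-1)/6$, the right-hand side is at most $n^2/4$, completing the induction. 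Consequently, the remaining case is when $\delta(G) > (2n-1)/6$.

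In this high minimum-degree regime, the Andrásfai--Erdős--Sós theorem resolves the upper portion: a triangle-free graph with $\delta(G) > 2n/5$ is forced to be bipartite, contradicting our standing assumption. The main obstacle is the intermediate range $(2n-1)/6 < \delta(G) \leq 2n/5$, in which $G$ can genuinely be a non-bipartite triangle-free graph---a canonical example is the balanced blow-up $C_5[\overline{K_m}]$. To close this range, one can appeal to stronger structural theorems on triangle-free graphs with high minimum degree, such as Brandt--Thomassé's result that $\chi(G) \leq 4$ whenever $\delta(G) > n/3$, and then bound $\taub(G)$ by averaging over the three ways of pairing up the color classes of a four-coloring. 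A more self-contained alternative, closer in spirit to the rest of the paper, is to iterate Lemma~\ref{lem:denseboth} (or apply Lemma~\ref{lem:densemin}) to larger sets $S$ in a vertex-minimum counterexample, using the resulting local constraints on vertex degrees in extremal configurations to rule out the few remaining candidate structures.
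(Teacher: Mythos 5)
Your reduction to $\sizeof{E(G)}+\taub(G)\leq n^2/4$ and your inductive step are fine as far as they go: applying Lemma~\ref{lem:denseboth} with $A=E(G)$ and $S=\{v\}$ does give $\aphtaub(G)\leq\aphtaub(G-v)+\tfrac32 d(v)$, which closes the induction when $\delta(G)\leq(2n-1)/6$, and Andr\'asfai--Erd\H os--S\'os disposes of $\delta(G)>2n/5$. But the intermediate range is a genuine gap, and the first repair you propose demonstrably fails. From a $4$-coloring, averaging over the three pairings of color classes gives only $\taub(G)\leq m/3$, hence $\aphtaub(G)\leq 4m/3$, and this exceeds $n^2/4$ as soon as $m>3n^2/16$. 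Your own example $C_5[\overline{K_{n/5}}]$ sits exactly in the uncovered range ($\delta=2n/5$, so Andr\'asfai--Erd\H os--S\'os does not apply) and has $m=n^2/5>3n^2/16$, so the bound $4m/3=4n^2/15>n^2/4$ is too weak there, even though the graph itself satisfies the lemma (its true value is $n^2/5+n^2/25=6n^2/25$). The second alternative you sketch is not a viable substitute either: Lemma~\ref{lem:densemin} gives $\sizeof{[S,\sbar]}>(\sizeof{S}-2t)(n-\sizeof{S})$ where $t$ is the independence number of $G[S]$, and in a triangle-free graph every $G[S]$ has large independence number, making that inequality vacuous.

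The paper's proof is entirely different and much shorter: it quotes the theorem of Erd\H os, Faudree, Pach, and Spencer that every triangle-free graph satisfies $\taub(G)\leq m-\lfrac{4m^2}{n^2}$, so that $\aph(G)+\taub(G)\leq 2m-\lfrac{4m^2}{n^2}$, and the right-hand side is maximized at $m=n^2/4$ with value exactly $n^2/4$. That inequality is tight precisely on balanced blow-ups of $C_5$, which is why it succeeds where your cruder estimate $\taub\leq m/3$ does not. If you want to keep your inductive framework, you would need a bound of comparable strength in the range $(2n-1)/6<\delta(G)\leq 2n/5$; as written, the argument is incomplete.
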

\begin{proof}
  This follows from a result of Erd\H os, Faudree, Pach, and
  Spencer~\cite{howbip}, who showed that if $G$ is an $n$-vertex
  triangle-free graph with $m$ edges, then $\taub(G) \leq m - \lfrac{4m^2}{n^2}$.
  This inequality immediately implies $\aph(G) + \taub(G) \leq 2m - \lfrac{4m^2}{n^2}$;
  maximizing the upper bound over $m$ yields $\aph(G) + \taub(G) \leq n^2/4$.
\end{proof}
\begin{theorem}
  If $G$ is an $n$-vertex graph with no induced copy of $K_4^-$, then
  $\aph(G) + \taub(G) \leq n^2/4$. 
\end{theorem}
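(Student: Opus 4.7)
The plan is to argue by contradiction: assume $G$ is a vertex-minimal counterexample to Conjecture~\ref{coj:puleo} with no induced copy of $K_4^-$. I would combine Lemma~\ref{lem:extendclique}, which constrains the behavior of maximal cliques in a minimal counterexample, with Lemma~\ref{lem:trifree}, which handles the triangle-free case. The goal is to show that the $K_4^-$-freeness forces $G$ to be triangle-free, and then close the argument with Lemma~\ref{lem:trifree}.

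First I would pick a maximal clique $S$ in $G$. By Lemma~\ref{lem:extendclique}, $S$ is contained in an induced copy of $K_{\sizeof{S}+1}^-$. If $\sizeof{S} \geq 3$, this produces an induced $K_4^-$ in $G$: when $\sizeof{S}=3$ the copy is already $K_4^-$, and when $\sizeof{S} \geq 4$ one takes the two non-adjacent vertices of $K_{\sizeof{S}+1}^-$ together with any two of the remaining vertices, which span an induced $K_4^-$. Since $G$ has no such induced subgraph, every maximal clique in $G$ has size at most $2$, so $G$ is triangle-free.

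Now Lemma~\ref{lem:trifree} applies and gives $\aph(G) + \taub(G) \leq n^2/4$, contradicting the assumption that $G$ is a counterexample. This contradiction establishes the theorem.

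The argument is essentially a short corollary of the previously established lemmas, so I do not expect a serious obstacle; the hard work was done in the proof of Lemma~\ref{lem:extendclique}, which converts the cut-density estimate of Lemma~\ref{lem:densemin} into structural information about maximal cliques. The only subtlety worth double-checking is the elementary fact that $K_k^-$ for $k \geq 5$ contains $K_4^-$ as an \emph{induced} subgraph; this is immediate by selecting the unique non-adjacent pair together with any two other vertices.
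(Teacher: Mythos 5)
Your proof is correct and takes essentially the same approach as the paper: both pass to a vertex-minimal counterexample, apply Lemma~\ref{lem:extendclique} to a maximal (in the paper, maximum) clique, and use Lemma~\ref{lem:trifree} to dispose of the triangle-free case, differing only in the order in which the two lemmas are invoked. Your explicit observation that $K_k^-$ for $k \geq 5$ contains an induced $K_4^-$ fills in a small step the paper leaves implicit.
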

\begin{proof}
  If not, let $G$ be a minimal graph with no induced copy of $K_4^-$
  for which $\aphtaub(G) > n^2/4$. If $G'$ is an induced subgraph of
  $G$ with $n'$ vertices, then $G'$ also has no induced copy of
  $K_4^-$, so $\aphtaub(G') \leq (n')^2/4$.  Thus, $G$ is a minimal
  counterexample to Conjecture~\ref{coj:puleo}.

  Let $S$ be a clique of maximum size in $G$. By Lemma~\ref{lem:trifree}, we have
  $\sizeof{S} \geq 3$. By Lemma~\ref{lem:extendclique}, it follows
  that $S$ is contained in some induced copy of
  $K_{\sizeof{S}+1}^{-}$, which contradicts the assumption that $G$
  has no induced copy of $K_4^-$.
\end{proof}
\section{Acknowledgments}
The author acknowledges support from the IC Postdoctoral Research Fellowship.
\bibliographystyle{amsplain}\bibliography{sumbib}
\end{document}